\theoremstyle{definition} % Define theorem styles here based on the definition style (used for definitions and examples)
\newtheorem{definition}{Definition}
\theoremstyle{plain} % Define theorem styles here based on the plain style (used for theorems, lemmas, propositions)
\newtheorem{theorem}{Theorem}
\newtheorem*{theorem*}{Theorem}
\newtheorem*{corollary*}{Corollary}
\theoremstyle{remark} % Define theorem styles here based on the remark style (used for remarks and notes)
\newtheorem{remark}{Remark}
\newtheorem{example}{Example}
\title{\normalfont{Supersingularity of Motives with Complex Multiplication and a Twisted Polarization}} % The article title
\author{\spacedlowsmallcaps{Asvin G}\footnote{\textit{Department of Mathematics; University of Wisconsin, Madison.}}
\footnote{\textit{  email: gasvinseeker94@gmail.com}}} % The article author(s) - author affiliations need to be specified in the AUTHOR AFFILIATIONS block
\date{} % An optional date to appear under the author(s)
\begin{document}

%----------------------------------------------------------------------------------------
%	HEADERS
%----------------------------------------------------------------------------------------

\renewcommand{\sectionmark}[1]{\markright{\spacedlowsmallcaps{#1}}} % The header for all pages (oneside) or for even pages (twoside)
\lehead{\mbox{\llap{\small\thepage\kern1em\color{halfgray} \vline}\color{halfgray}\hspace{0.5em}\rightmark\hfil}} % The header style

\pagestyle{scrheadings} % Enable the headers specified in this block

%----------------------------------------------------------------------------------------
%	TABLE OF CONTENTS & LISTS OF FIGURES AND TABLES
%----------------------------------------------------------------------------------------

\maketitle % Print the title/author/date block

\setcounter{tocdepth}{2} % Set the depth of the table of contents to show sections and subsections only

% \tableofcontents % Print the table of contents

%----------------------------------------------------------------------------------------
%	ABSTRACT
%----------------------------------------------------------------------------------------

\begin{abstract}
Let $X$ be a smooth, projective variety over a finite field $\mathbb F_q$. We say that its crystalline cohomology $H^i_{\mathrm{crys}}(X)/W(\mathbb F_q)$ is supersingular if all the eigenvalues for the action of the Frobenius $\sigma_q$ on it are of the form $q^{i/2}\zeta$ for $\zeta$ a root of unity. In this note, we prove a criteria for supersingularity when the variety has a large automorphism group and a perfect bilinear pairing. This criteria unifies and extends many known results on the supersingularity of curves and varieties and in particular, applies to a large family of Artin-Schreier curves.
    
\end{abstract} % This section will not appear in the table of contents due to the star (\section*)

%----------------------------------------------------------------------------------------
%	AUTHOR AFFILIATIONS
%----------------------------------------------------------------------------------------

%----------------------------------------------------------------------------------------

% \newpage % Start the article content on the second page, remove this if you have a longer abstract that goes onto the second page

%----------------------------------------------------------------------------------------
%	INTRODUCTION
%----------------------------------------------------------------------------------------

\section{Introduction}

Informally, a smooth projective variety $X/\mathbb F_q$ is said to be supersingular if for each $i$, the Newton slopes of the motive $H^i(X)$ are all equal to $i/2$. There are many (equivalent) ways of making this definition precise and in the case of Abelian varieties (and $K3$ surfaces), it is a well studied notion. For the purposes of this article, we will use the slightly non standard definition:
\begin{definition}
A smooth, projective variety $X/\mathbb F_q$ is said to be supersingular in degree $i$ if the eigenvalues of the Frobenius $\sigma_q$ on the crsytalline cohomology $H^i_{\mathrm{crys}}(X)/W(\mathbb F_q)$ are of the form $q^{i/2}\zeta$ for $\zeta$ a root of unity. We say $X$ is supersingular if it is supersingular in every degree $i$.

\end{definition}

In this paper, we consider varieties $X$ with a large automorphism group $G$ so that $H^i_{\text{\'et}}(\overline{X},\mathbb Q_\ell)$ is a quotient of $\mathbb Q_\ell[G]$ as $G$ modules compatible with the Frobenius action ("complex multiplication") and an equivariant inner product
    \[H^i_{\text{\'et}}(\overline{X},\mathbb Q_\ell)\otimes H^i_{\text{\'et}}(\overline{X},\mathbb Q_\ell) \to \mathbb Q_\ell(-d).\]

\begin{theorem}
    If $H^i_{\text{\'et}}(\overline{X},\mathbb Q_\ell)$ is self-dual, i.e., for any character $\chi$ of $G$, the $\chi^{-1}$-isotypic eigenspace of $H^i_{\text{\'et}}(\overline{X},\mathbb Q_\ell)$ is in the Frobenius orbit of the $\chi$-isotypic eigenspace, then $X$ is supersingular in degree $i$. The converse is also true if $d=1$.
\end{theorem}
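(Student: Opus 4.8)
The plan is to diagonalize the $G$-action and then track how Frobenius permutes the resulting lines. Since $H := H^i_{\text{\'et}}(\overline X,\mathbb Q_\ell)$ is a quotient of $\mathbb Q_\ell[G]$, after extending scalars to $\overline{\mathbb Q}_\ell$ it decomposes as $H\otimes\overline{\mathbb Q}_\ell = \bigoplus_{\chi\in S} V_\chi$, where each isotypic piece $V_\chi$ is at most one dimensional and $S$ is the set of characters that occur. The Frobenius $F=\sigma_q$ does not commute with $G$ but conjugates it through the Frobenius action on $G$; consequently $F$ carries $V_\chi$ to $V_{\sigma(\chi)}$, where $\sigma$ is the induced permutation of characters (for $G$ cyclic of order prime to $q$ this is $\chi\mapsto\chi^{q}$). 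Thus $F$ permutes the lines $V_\chi$ along the $\sigma$-orbits, and on an orbit $O$ of size $r$ the operator $F^{r}$ preserves every line and acts on each by one and the same scalar $\lambda_O$ (the scalars agree because $F$ conjugates $F^{r}|_{V_\chi}$ into $F^{r}|_{V_{\sigma(\chi)}}$). Hence the eigenvalues of $F$ on $\bigoplus_{\chi\in O}V_\chi$ are exactly the $r$-th roots of $\lambda_O$. Finally, equivariance of the pairing forces $\langle V_\chi, V_{\chi'}\rangle=0$ unless $\chi'=\chi^{-1}$, so the perfect form induces a perfect duality $V_\chi\times V_{\chi^{-1}}\to\mathbb Q_\ell(-d)$ on which $F$ acts by $q^{d}$.

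For the forward implication I would argue orbit by orbit, and this needs no restriction on $d$. Self-duality says precisely that $\chi^{-1}$ lies in the same $\sigma$-orbit $O$ as $\chi$, so the pairing $V_\chi\times V_{\chi^{-1}}\to\mathbb Q_\ell(-d)$ couples two lines of one and the same orbit. Choosing $v\in V_\chi$ and $w\in V_{\chi^{-1}}$ with $\langle v,w\rangle\neq 0$, and using both that $F^{r}$ acts by $\lambda_O$ on each line and that $\langle Fa,Fb\rangle=q^{d}\langle a,b\rangle$, I obtain
\[
\lambda_O^{2}\,\langle v,w\rangle \;=\; \langle F^{r}v,\,F^{r}w\rangle \;=\; q^{dr}\,\langle v,w\rangle ,
\]
hence $\lambda_O^{2}=q^{dr}$. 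Every eigenvalue $\mu$ of $F$ on this orbit satisfies $\mu^{r}=\lambda_O$, so $\mu^{2r}=q^{dr}$ and $\mu/q^{d/2}$ is a $2r$-th root of unity. Comparing with the Weil bound $|\mu|=q^{i/2}$ (equivalently, noting that a perfect Frobenius-equivariant pairing already forces $d=i$, since paired eigenvalues multiply to $q^{d}$ while having absolute value $q^{i/2}$) gives $\mu=q^{i/2}\zeta$ with $\zeta$ a root of unity, which is supersingularity in degree $i$.

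The converse is where the hypothesis $d=1$ is essential, and it is the step I expect to be the main obstacle. Here I would argue by contraposition: assuming supersingularity I want to show every orbit $O$ equals its dual orbit $O^{-1}$ (the orbit of $\chi^{-1}$). Two structural inputs are available. First, complex conjugation acts semilinearly on $H\otimes\overline{\mathbb Q}_\ell$, commutes with the rational operator $F$, and sends $V_\chi$ to $V_{\chi^{-1}}$; it therefore carries $O$ to $O^{-1}$ and conjugates $\lambda_O$ to $\lambda_{O^{-1}}$. Second, the polarization gives $\lambda_O\,\lambda_{O^{-1}}=q^{r}$, exactly as above. The difficulty is that these relations together with supersingularity are, by themselves, consistent with $O\neq O^{-1}$: they only pin down $|\lambda_O|=q^{r/2}$. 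Breaking this symmetry is precisely where $d=1$ must enter, through the fact that the pairing on $H^{1}$ is a perfect \emph{alternating} form and $F$ a symplectic similitude with multiplier $q$: a non-self-dual orbit would make $\bigoplus_{\chi\in O}V_\chi$ an $F$-stable isotropic subspace paired perfectly against $\bigoplus_{\chi\in O}V_{\chi^{-1}}$. I would then try to show this splitting is incompatible with supersingularity, the goal being that such an orbit produces a Frobenius eigenvalue whose ratio to $q^{1/2}$ is not a root of unity. Establishing this last point rigorously --- ruling out a non-self-dual orbit that is nonetheless of slope $1/2$, presumably via the integrality of the Weil numbers $\lambda_O$ together with the alternating nature of the form --- is the crux of the converse and the part I expect to require real work.
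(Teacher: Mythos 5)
Your forward direction is correct and is essentially the paper's own argument (Theorem \ref{thm: sufficient criterion for supersingularity}): pairing two lines $V_\chi$, $V_{\chi^{-1}}$ lying in a single Frobenius orbit and applying $F^r$ to get $\lambda_O^2 = q^{dr}$ is exactly the paper's telescoping identity, just phrased without the bookkeeping element $a \in R[G]$. The converse is where your proposal stops short of a proof, as you yourself acknowledge, and the route you sketch points away from the mechanism that actually works. The paper's proof (Theorem \ref{thm: necessary criterion for supersingularity}) is $p$-adic, not $\ell$-adic: it replaces \'etale by crystalline cohomology over $W(\mathbb F_p)$, writes $\sigma_p(m) = am$ with $a$ integral, and uses two facts. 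First, within a $\sigma_p$-orbit $\chi_1,\dots,\chi_r$ the scalars $\chi_i(a)$ are Galois conjugate over $\mathbb Q_p$ (the orbit is generated by $\chi \mapsto \chi^p$, which is precisely the Galois action on $\mu_n$), so they share a single $p$-adic valuation $\alpha$, a nonnegative \emph{integer} because $a$ is integral and $\mathbb Z_p[\mu_n]$ is unramified; likewise the inverse orbit has common valuation $\beta$. Second, the pairing identity $\prod_i \chi_i(a)\cdot\prod_i \chi_i^{-1}(a) = p^{rd}$ forces $\alpha+\beta = d$. For $d=1$ this gives $\{\alpha,\beta\}=\{0,1\}$: a non-self-dual orbit produces a slope-$0$/slope-$1$ pair of Frobenius eigenvalues, contradicting slope $1/2$ everywhere. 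That is the entire role of $d=1$ (an odd valuation budget cannot split into two equal integers); the alternating or symplectic nature of the form, which you propose as the key ingredient, plays no role at all.

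Your attempt also omits a hypothesis that is genuinely indispensable: the converse requires $X$ to be defined over the prime field, $q=p$ (this is what the paper means by saying the minimal field of definition matters for the necessity direction). Without it the converse is simply false. Take a supersingular elliptic curve over $\mathbb F_{p^2}$ with $j=0$, $p \equiv 2 \pmod 3$, and $G = \mu_6$ acting by automorphisms rational over $\mathbb F_{p^2}$: then $\sigma_{p^2}$ commutes with $G$, every character orbit is a singleton, so $\chi^{-1}$ is never in the orbit of $\chi$; yet the curve is supersingular, $d=1$, the Weil pairing is a perfect alternating equivariant form, and the relation $\lambda_O\lambda_{O^{-1}} = q^r$ holds. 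Every structural input you allow yourself in the converse (orbit structure, the symplectic similitude $F$, the polarization identity, supersingularity) is present in this example, so no argument built only from them can succeed --- your worry that these relations are ``consistent with $O \neq O^{-1}$'' is not just a difficulty but an actual obstruction. Your parenthetical instinct about integrality of the eigenvalues is the right one, but it must be carried out on the $p$-adic side, where slopes are visible, together with the hypothesis $q = p$; that is exactly the valuation argument above.
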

The proof is elementary and it generalizes and unifies many old results proven by disparate methods. Some examples of this theorem are as follows. In each case $n$ is a positive integer co-prime to the characteristic $p$. An interesting feature of the proof is that while supersingularity is invariant under extending the base field, our necessity criterion is not. Therefore, the minimal field of definition of $X$ becomes important for this direction.

\begin{enumerate}
    \item \textbf{Fermat varieties:} Hypersufaces of dimension $d$ defined by
    \[F_{n,d}: X_0^n + \dots + X_{d+1}^n = 0.\]
    In this case, much is known by the results of Shioda \cite{shioda1979fermat} and our methods offer an alternate proof of some of their results. For Fermat Curves in particular ($d=1$), we prove that $F_{n,1}$ is supersingular in characteristic $p$ if and only if there exists some $s\geq 1$ such that $p^s \equiv -1 \pmod{n}$. In higher dimensions, we prove that the condition is sufficient while \cite{shioda1979fermat} also proves necessity. This also implies the supersingularity of quotients such as the Hurwitz curves \cite{Dawson_2019} and superelliptic curves since they are quotients of Fermat curves.
    
    \item \textbf{Abelian varieties with CM by a cyclotomic field:} Let $A_n/\mathbb F_q$ be an abelian variety with $\mu_n \subset \mathrm{End}_{\overline{\mathbb F}_q}(A_n)$ and $\dim A_n \leq n$. For instance $A$ could be the reduction of an abelian variety in characteristic zero with CM by $\mathbb Q(\mu_n)$. In this case, we prove that $A_n$ is supersingular if there exists a $s \geq 1$ such that $q^s \equiv -1 \pmod{n}$. If moreover $q$ is a prime, i.e., the field of moduli\footnote{Over a finite field, the field of moduli coincides with the field of definition} is a prime field then the converse is also true. This is a special case of a known fact (Remark \ref{rmk: CM AV}) although the method of proof is completely different.
    
    \item \textbf{Artin-Schreier curves:} Let $C_{q,n}$ be the (smooth, projective models corresponding to the) degree $n$ cyclic covers of $\mathbb P^1$ defined by the equation
    \[y^q - y = x^n \text{ over } \mathbb F_p.\]
    We prove that $C_{q,n}$ is supersingular if there exists $s\geq 1$ such that $p^s \equiv -1 \pmod{n}$. We suspect that the converse is also true but cannot prove it. This result in this generality is new to our knowledge although much is known about this question. 
    
    Most notably, \cite{blache2012valuation} characterizes completely the $f(x)$ such that $y^p-y = f(x)$ is supersingular and our result (with $q=p$) is Corollary 3.7, (iii) of that paper. This proof is by an explicit examination of the p-adic valuations of the Gauss sums that appear as eigenvalues of the Frobenius in this case. By different methods, \cite[Theorem 13.7]{van1992reed} proves that $y^p-y = xR(x)$ is supersingular if $R(x)$ is an \textit{additive polynomial}. This recovers the case $q=p, n = p+1$ of our theorem. Notably, the proofs in both these preceding results are significantly more complicated than ours.
    
    This result should also be contrasted with the main theorem of \cite{irokawa1991remark}. They show that the Jacobian of the smooth projective curve corresponding to the equation
    \[y^p-y = f(x)\]
    is superspecial, i.e., \textit{isomorphic} to a product of supersingular elliptic curves exactly when $f(x) = x^n$ with $n | p+1$. This is a strictly stronger condition than being supersingular, i.e., \textit{isogenous} to a product of supersingular elliptic curves and the two results together produce a large family of Jacobian varieties that are supersingular but not superspecial.

\end{enumerate}

As some motivation for considering supersingularity:
\begin{itemize}
    \item Suppose $q$ is a square. Then, supersingular curves  are exactly the maximizers/minimizers of $|C(\mathbb F_{q})|$ as $C$ ranges over all smooth, projective curves of genus $g$ by \cite[Theorem~2.1]{garcia2007certain}.
    \item Supersingular abelian varieties are isogenous to a product of supersingular elliptic curves (over $\overline{\mathbb F}_q$) by Honda-Tate theory.
    \item  If the cycle class map is surjective, then $X$ is supersingular in degree $2i$ and the converse is true if the Tate conjecture is true for the degree $2i$ cohomology.  
    \item The Tate conjecture is known for supersingular $K3$ surfaces over a finite field (and indeed, the supersingular case is the hardest among all $K3$ surfaces) due to a series of papers by  Charles, Kim, Madapusi Pera, and Maulik (\cite{TCCharles}, \cite{TCchar2}, \cite{TCmadapusi}, \cite{TCmaulik}). For a nice survey, see \cite{TCsurvey}. In particular, supersingular $K3$ surfaces have Picard rank $22$, the maximal possible.
    \item A conjecture due to Artin, Rudakov, Shafarevich and Shioda asserts that a $K3$ surface is supersingular if and only if it is unirational. Little is known about this conjecture \cite[Section~1.2]{bragg2019perfect} except for Fermat surfaces and $K3$ surfaces with low Artin height.
    \item Fermat varieties
    \[X_0^m + \dots X_{2r+1}^m = 0\]
    of even dimension $2r$ are unirational if they are supersingular and the converse is true for $r=1$ and $m\geq 4$ by \cite{shioda1979fermat}.

\end{itemize}

\textit{Acknowledgements.} I would like to thank Rachel Pries for discussions about the paper and references in the literature.

\section{Preliminary definitions}

We establish notation and definitions first. We will fix two distinct primes $p,\ell$ throughout and a prime power $q = p^r$ for some $r\geq 1$. We also fix a finite abelian group $G$ (of size $n$) throughout. All objects considered in this paper come equipped with a "Frobenius" action $\sigma_q$. For varieties over $\mathbb F_q$, $\sigma_q$ is the usual geometric Frobenius. For the group $G$, we denote the action by $g \to g^{\sigma_q}$.

We will let $R$ be either an extension of $\mathbb Q_\ell$ or the Witt vectors $\mathbb Z_q$ of a finite field $\mathbb F_q$. In the first case, $\sigma_q$ acts trivially while in the second case, $\sigma_q$ is the functorial lift of the Frobenius.

For us $M$ will denote a finite free $R$-module\footnote{When $R$ is an extension of $\mathbb Z_p$, we will sometimes use $W$ instead of $M$} with an action of $G$ and a semilinear action of $\sigma_q$, i.e,
\[\sigma_q(rm) = \sigma_q(r)\sigma_q(m) \text{ for any } r \in R[G].\]
We define $\overline{M} = M\otimes_{R}\overline{R}$ and denote the eigenspace corresponding to the character $\chi$ by
\[ M(\chi) \coloneqq \{m \in M : g(m) = \chi(g)m\}.\]

The action of $\sigma_q$ permutes the characters $\chi: G \to R(\mu_n)$ by 
\[\chi \to \chi^{\sigma_q}\coloneqq \chi\circ \sigma_q^{-1}: G \to R(\mu_n).\]

 We will abuse notation and also denote by $\chi$ the linear extension
\begin{align*}
    \chi: \overline{R}[G] &\to \overline{R};\\
    \sum_{h \in H}a_h[h] &\to \sum_{h \in H}a_h\chi(h).
\end{align*}
When $R$ is an extension of $\mathbb Z_p$, we assume further that $n$ is co-prime to $p$ and in either case, define the isotypic projections
\[ \pi_\chi = \frac1n\sum_{h \in G}\chi^{-1}(h)h \in R(\mu_n)[G].\]
Note that $\sigma_q\circ \pi_\chi = \pi_{\chi^{\sigma_q}}$. Finally, we denote by $R(-d)$ the one dimensional free module over $R$ with $G$ acting trivially and the action of $\sigma_q$ twisted by $q^d$, i.e., $\sigma_q(m) = q^d$ for $m$ some generator of $R(-d)$.

In this paper, we consider finite free modules $M$ as above with the following three additional structures associated to it.
\begin{itemize}\label{def: set up}
    \item The group $G$ acts on it so that there exists a $G$-equivariant surjection $R[G] \to M$. Equivalently, every character of $G$ appears at most once in $\overline{M}$. We fix such a surjection and let $m$ be the image of $1 \in R[G]$ and further define $m_\chi = \pi_\chi(m)$.
    \item By the above assumption, we can pick $a \in R[G]$ such that $\sigma_q(m) = am$. We observe here that $\sigma_q$ maps $M(\chi) \to M(\chi^{\sigma_q})$:
    \[ \text{For } g \in G, \ \  g\sigma_q(m_\chi) = \sigma_q \circ g^{\sigma_q^{-1}}(m_\chi) = \chi(g^{\sigma_{q}^{-1}})\sigma_q(m) = \chi^{\sigma_q}(g)\sigma_q(m_\chi).\]
    Moreover, 
    \begin{equation}\label{eqn: frob action on mchi}
        \sigma_q(m_\chi) = \sigma_q\circ\pi_\chi(m) = \pi_{\chi^{\sigma_q}}\circ\sigma_q(m) = \chi^{\sigma_q}(a)m_{\chi^{\sigma_q}}.
    \end{equation}
    \item There is a $G,\sigma_q$ equivariant perfect pairing
    \[\langle-,-\rangle: M\otimes_R M \to R(-d).\]
    For two characters $\eta,\rho$ of $G$ and $g \in G$, we see that
        \[g\langle m_\eta,m_\rho\rangle = \langle gm_\eta,gm_\rho\rangle = \eta(g)\rho(g)\langle m_\eta,m_\rho\rangle.\]
    Since $G$ acts trivially on $R(-d)$, $\langle m_\eta,m_\rho\rangle = 0$ unless $\rho = \eta^{-1}$. Since the pairing is perfect, this shows that for every character $\chi$ of $G$ appearing in $M$, $\chi^{-1}$ also occurs in $M$ and $\langle m_{\chi},m_{\chi^{-1}}\rangle \neq 0$.
\end{itemize}

\begin{definition}
    For $M$ as above, we say that it is supersingular if all the eigenvalues of $\sigma_q$ (considered as a vector space over $\mathbb Z_p$ or $\mathbb Q_\ell$) are of the form $q^e\zeta$ for $e \in \frac12\mathbb N_{\geq 0}$ and $\zeta$ a root of unity.
\end{definition}

The above definition matches the classical definition of supersingularity in the case where $M$ is a F-crystal over $R$ ,i.e., $\mathbb Z_p \subset R$ and $\sigma_q: M \to M$ is an injective $\sigma_q$-endomorphism. If $X/\mathbb F_q$ is an algebraic variety, then the crystalline cohomology groups $H^i_{\mathrm{crys}}(X)/W(\mathbb F_q)$ is a $F$-crystal over the Witt vectors $R = W(\mathbb F_q)$.

\begin{definition}
We say that $X$ is supersingular if all the crystalline cohomology groups are supersingular and more specifically, we say that $X$ is supersingular in degree $i$ if $H^i_{\mathrm{crys}}(X)/W(\mathbb F_q)$ is supersingular.    
\end{definition}

By Katz-Messing \cite[Theorem 1]{katz1974some}, $X$ is supersingular in degree $i$ if and only if the \'etale cohomology group $H^i_{\text{\'et}}(\overline{X},\mathbb Z_\ell)$ is supersingular considered as a module over $R = \mathbb Z_\ell$ with $\sigma_q$ induced by the geometric Frobenius.

\section{A sufficient criterion for supersingularity}\label{sec: sufficient criterion}

\begin{theorem}\label{thm: sufficient criterion for supersingularity}
Let $M$ be as above, defined over $R \supset \mathbb Q_\ell$. Suppose that for each character $\chi$ of $G$ occurring in $M$, $\chi^{-1}$ is in the $\sigma_q$ orbit of $\chi$. Then $M$ is supersingular.

\end{theorem}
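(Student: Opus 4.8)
The plan is to diagonalize the problem into the $\sigma_q$-orbits of the one-dimensional character eigenspaces and to compute the eigenvalues of $\sigma_q$ one orbit at a time, using the perfect pairing to pin them down. Recall from \eqref{eqn: frob action on mchi} that $\sigma_q(m_\chi) = \chi^{\sigma_q}(a)\,m_{\chi^{\sigma_q}}$, so $\sigma_q$ carries the line $\overline{M}(\chi)$ isomorphically onto $\overline{M}(\chi^{\sigma_q})$. Fix an orbit $O = \{\chi_0,\ \chi_1 = \chi_0^{\sigma_q},\ \dots,\ \chi_{k-1} = \chi_0^{\sigma_q^{k-1}}\}$ of size $k$ and restrict $\sigma_q$ to the $k$-dimensional space $\bigoplus_{\chi \in O}\overline{M}(\chi)$. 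In the basis $\{m_{\chi_j}\}$ the operator $\sigma_q$ is a weighted cyclic shift, so its characteristic polynomial is $T^{k} - \lambda$ with $\lambda = \prod_{\chi \in O}\chi(a)$; equivalently $\sigma_q^{k}$ acts on $\overline{M}(\chi_0)$ by the scalar $\lambda$. Thus every eigenvalue $\mu$ of $\sigma_q$ coming from this orbit satisfies $\mu^{k} = \lambda$, and it remains only to determine $\lambda$.

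I would first record the structural consequence of the hypothesis: since $(\chi^{-1})^{\sigma_q} = (\chi^{\sigma_q})^{-1}$, the assumption that $\chi^{-1}$ lies in the $\sigma_q$-orbit of $\chi$ says precisely that $O$ is stable under the involution $\chi \mapsto \chi^{-1}$. Next I would feed the pairing in. Fixing a generator $t$ of $R(-d)$ with $\sigma_q t = q^{d}t$ and writing $\langle m_\chi, m_{\chi^{-1}}\rangle = \beta_\chi\, t$ with $\beta_\chi \in \overline{R}^{\times}$ (nonzero by the set-up), the $G,\sigma_q$-equivariance of $\langle-,-\rangle$ together with \eqref{eqn: frob action on mchi} yields, for every $\chi \in O$, the coefficient identity
\[\chi^{\sigma_q}(a)\,(\chi^{\sigma_q})^{-1}(a)\,\beta_{\chi^{\sigma_q}} = q^{d}\,\beta_\chi.\]

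The heart of the argument is to multiply this identity over all $\chi \in O$. Because $\sigma_q$ permutes $O$, the first factor contributes $\prod_{\chi \in O}\chi(a) = \lambda$, and the $\beta_{\chi^{\sigma_q}}$ reindex to $\prod_{\chi \in O}\beta_\chi$, which cancels the identical product appearing on the right. The decisive step is the middle factor: $\prod_{\chi \in O}(\chi^{\sigma_q})^{-1}(a) = \prod_{\psi \in O}\psi^{-1}(a)$, and here the stability of $O$ under inversion lets me reindex $\psi^{-1}$ over $O$ to obtain $\lambda$ once more. The surviving identity is $\lambda^{2} = q^{dk}$, whence $\lambda = \pm\, q^{dk/2}$, so every eigenvalue $\mu$ from the orbit satisfies $\mu^{k} = \pm\, q^{dk/2}$, i.e. $\mu = q^{d/2}\zeta$ for some root of unity $\zeta$. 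As the orbit was arbitrary, all eigenvalues of $\sigma_q$ have this shape and $M$ is supersingular with $e = d/2$.

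I expect the only genuine obstacle to be this orbit-product cancellation: one must verify that it is exactly the hypothesis on $\chi^{-1}$ that collapses the middle factor back to $\lambda$, and that the $\beta$'s cancel cleanly once the values of the pairing are normalized against a fixed generator of the twisted module $R(-d)$ (so that the computation takes place honestly in $\overline{R}$, where $\sigma_q$ acts trivially on scalars). Two minor points should be checked in passing: that $\sigma_q$ is invertible on each orbit, which is automatic since $\lambda^{2} = q^{dk} \neq 0$ forces $\chi(a) \neq 0$ for all $\chi \in O$; and that the weighted-cyclic-shift computation of the characteristic polynomial $T^{k} - \lambda$ is valid over $\overline{R}$.
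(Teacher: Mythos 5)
Your proposal is correct and is essentially the paper's own proof: the same orbit decomposition, the same identification of the eigenvalue of $\sigma_q^{k}$ on an orbit with $\lambda=\prod_{\chi\in O}\chi(a)$, and the same step of multiplying the pairing-equivariance identity over the orbit, where closure of $O$ under $\chi\mapsto\chi^{-1}$ turns the middle product into $\lambda$ and the pairing constants cancel cyclically, yielding $\lambda^{2}=q^{dk}$. The only differences are notational (you normalize the pairing values as $\beta_\chi$ against a fixed generator of $R(-d)$ and spell out the weighted-cyclic-shift characteristic polynomial, which the paper leaves implicit).
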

\begin{proof}
Since our theorem is insensitive to the base change $R \to R(\mu_n)$, we suppose that $\mu_n \subset R$. Let $\chi_1,\dots,\chi_r$ be an orbit of characters under the Frobenius action so that $\chi_{i+1} = \chi_i^{\sigma_q}$ for $1 \leq i < r$ and $\chi_1 = \chi_r^{\sigma_q}$. Then, $\sigma_q^r$ acts as an endomorphism of $M(\chi_i)$ for each $i$ and we will prove that its eigenvalue $\mu$ is given by $q^{rd/2}$ so that the eigenvalues of $\sigma_q$ are given by $q^{d/2}\zeta_r^k$ for some $k$:

In terms of the $a \in R[G]$ defining the action of $\sigma_q$, we see that \(\sigma_q^r(m) = \sigma_q^{r-1}(a)\dots\sigma_q(a)am\) so that for $\chi \in \{\chi_1,\dots,\chi_r\}$,
\begin{equation}\label{eqn: eigenvalue of frobenius}
    \sigma_q^r(m_\chi) = \sigma_q^r\pi_\chi(m) = \pi_\chi\sigma_q^r(m) = \prod_{i=1}^r\chi_i(a)m \implies \mu = \prod_{i=1}^r\chi_i(a).
\end{equation} 
Moreover, using equation \ref{eqn: frob action on mchi},
\begin{equation*}
    q^d\langle m_{\chi_i}, m_{\chi_i^{-1}}\rangle = \langle\sigma_q m_{\chi_i},\sigma_q m_{\chi^{-1}_i}\rangle = \langle \chi_{i+1}(a)m_{\chi_{i+1}}, \chi^{-1}_{i+1}(a)m_{\chi^{-1}_{i+1}}\rangle.
\end{equation*}
Rearranging, we obtain
\begin{equation}\label{eqn: inner product formula}
    \chi_{i+1}(a)\chi^{-1}_{i+1}(a) = q^d\frac{\langle m_{\chi_i},m_{\chi^{-1}_i}\rangle}{\langle m_{\chi_{i+1}},m_{\chi^{-1}_{i+1}}\rangle}.
\end{equation}

By our hypothesis, for every $i$ there exists a $j$ such that $\chi_i^{-1} = \chi_j$. Taking a product over $i=1,\dots,r$ of equation \ref{eqn: inner product formula} yields (together with equation \ref{eqn: eigenvalue of frobenius}):
\[\mu^2 = \left(\prod_{i=1}^r\chi_i(a)\right)^2 = \frac{\prod_{i=1}^rq^d\langle m_{\chi_i},m_{\chi_i^{-1}}\rangle}{\prod_{i=1}^r\langle m_{\chi_{i+1}},m_{\chi_{i+1}^{-1}}\rangle} = q^{rd}\]
as required.

\end{proof}

\section{A necessary criterion for supersingularity}\label{sec: necessary criterion}

In this section, we prove a converse to the criterion of the previous section that will be applicable to curves (and their Jacobians). Note that supersingularity is insensitive to extending the ground field $\mathbb F_q \to \mathbb F_{q^m}$ but for $q$ large enough and $\chi$ any character of $G$, $\sigma_q\circ\chi = \chi$. Therefore we will need more stringent hypotheses.

In order to make sense of $p$-divisibility, we now switch to working with modules over $R$ the Witt ring of a finite field. We assume that $q = p, \gcd(n,p)=1$ and that $d=1$ so that we have a perfect pairing (compatible with the $G,\sigma_p$ action)
    \[W \otimes_{R} W \to R(-1).\]

\begin{theorem}\label{thm: necessary criterion for supersingularity}
If there exists a character $\chi$ of $G$ such that it appears in $G$ but $\chi^{-1}$ does not lie in the $\sigma_p$ orbit of $\chi$, then $W$ is not supersingular. 
\end{theorem}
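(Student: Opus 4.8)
The plan is to dualize the computation behind Theorem~\ref{thm: sufficient criterion for supersingularity} and to extract a single $p$-adic valuation that is forced to equal $\tfrac12$ under supersingularity but that I can detect to be different. Throughout I work after the (slope-preserving) base change to $R(\mu_n)=\mathbb Z_{p^f}$, where $f$ is the order of $p$ modulo $n$, and write $v=v_p$ normalized by $v(p)=1$. Let $O=\{\chi=\chi_1,\dots,\chi_r\}$ be the $\sigma_p$-orbit of $\chi$ and $O^{-1}=\{\chi_1^{-1},\dots,\chi_r^{-1}\}$ its dual; by hypothesis these are disjoint, and $\sigma_p\colon W(\chi_i)\to W(\chi_{i+1})$ realizes each as a cyclic isocrystal. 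Exactly as in the sufficiency proof the factors $c_i$ defined by $\sigma_p(m_{\chi_i})=\chi_{i+1}(a)\,m_{\chi_{i+1}}$ are nonzero, and on the top exterior power of $W_O:=\bigoplus_i W(\chi_i)$ the operator $\sigma_p$ acts through the scalar $\mu=\prod_{i=1}^r\chi_i(a)$, with an analogous $\mu'$ governing $O^{-1}$.

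First I would pin the slopes. Because each $W(\chi_i)$ is one dimensional and $\sigma_p$ permutes them cyclically, the isocrystal $W_O$ is isoclinic: computing the $\mathbb Q_{p^f}$-linear operator $\sigma_p^{f}$ on each line shows every Newton slope on $W_O$ equals $v(\mu)/r$, and likewise $O^{-1}$ has slope $v(\mu')/r$. Running the pairing identity \eqref{eqn: inner product formula} with $d=1$ and multiplying around the orbit makes the boundary pairing terms telescope (the orbit is cyclic), giving $\mu\mu'=p^{r}$ and hence $v(\mu)+v(\mu')=r$. Thus the two slopes are $\lambda$ and $1-\lambda$ for $\lambda=v(\mu)/r$, and $W$ is supersingular on this part precisely when $\lambda=\tfrac12$, i.e.\ when $v(\mu)=r/2$.

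The decisive structural point, and where the hypotheses $q=p$ and ``field of moduli a prime field'' enter, is that $v(\mu)\in\mathbb Z$. Indeed the $\sigma_p$-orbit of characters is a full $\Gal(\mathbb Q_{p^f}/\mathbb Q_p)$-orbit, so $W_O$ descends to a $\sigma_p$-stable $\mathbb Q_p$-subspace of dimension $r$ on which $\sigma_p$ is $\mathbb Z_p$-linear and lattice-preserving; its determinant isocrystal $\det W_O$ is then one dimensional over $\mathbb Z_p$ with slope $v(\mu)=v\bigl(\det_{\mathbb Q_p}\sigma_p\mid W_O\bigr)\in\mathbb Z_{\ge 0}$. (This integrality is exactly what is destroyed by a base extension $\mathbb F_p\to\mathbb F_{p^m}$, explaining why the criterion is sensitive to the field of definition.) Consequently $\lambda\in\tfrac1r\mathbb Z$, and $\lambda=\tfrac12$ already forces $r$ even. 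In particular, when $r$ is odd we are finished at once: an integer $v(\mu)$ cannot equal $r/2$, so the slope on $O$ is not $\tfrac12$ and $W$ is not supersingular.

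The main obstacle is the case of even $r$, where I must exclude the balanced value $v(\mu)=r/2$. Here I would use perfectness: since $m_{\chi_i},m_{\chi_i^{-1}}$ generate the rank-one pieces $W(\chi_i),W(\chi_i^{-1})$, the value $\langle m_{\chi_i},m_{\chi_i^{-1}}\rangle$ is a unit, so \eqref{eqn: inner product formula} reads $v(\chi_i(a))+v(\chi_i^{-1}(a))=1$ term by term; thus each $b_i:=v(\chi_i(a))\in\{0,1\}$ and $v(\mu)=\#\{i:b_i=1\}$. When $O$ is self-dual the involution $\psi\mapsto\psi^{-1}$ pairs the $r$ characters of $O$ into $r/2$ complementary pairs and forces this count to be exactly $r/2$, which recovers Theorem~\ref{thm: sufficient criterion for supersingularity} cleanly; under our hypothesis the involution lands in the \emph{disjoint} orbit $O^{-1}$, so this self-pairing is unavailable. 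The remaining work is to show $\#\{i:b_i=1\}\neq r/2$ directly. Because $q=p$, each $b_i$ is a single base-$p$ Hodge digit of $\chi_i=\chi^{p^{i-1}}$, so $\#\{i:b_i=1\}$ counts the ``carries'' in the cycle $t\mapsto pt\pmod m$ (with $m=\operatorname{ord}\chi$), equivalently the $p$-adic valuation of the relevant Jacobi-sum norm by a Stickelberger/Shimura--Taniyama computation. I expect to prove that this carry count is unbalanced exactly because no power of $p$ carries $\chi$ to $\chi^{-1}$; establishing this final combinatorial valuation identity, rather than the isocrystal bookkeeping of the earlier steps, is the crux of the argument.
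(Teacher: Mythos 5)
Your setup runs parallel to the paper's proof and is sound as far as it goes: after base change to the unramified ring $\mathbb Z_p[\mu_n]$, the orbit products satisfy $\mu\mu'=p^r$ (this is the paper's equation \eqref{eqn: converse eigenvalue formula}), and perfectness of the lattice pairing gives the term-by-term identity $b_i+v(\chi_i^{-1}(a))=1$ with $b_i=v(\chi_i(a))\in\{0,1\}$, so that non-supersingularity reduces to $\#\{i: b_i=1\}\neq r/2$. But your argument then stops exactly at the decisive point: for even $r$ you defer the unbalanced-count claim, calling it ``the crux of the argument,'' and the route you sketch for it --- reading the $b_i$ as base-$p$ Hodge digits and counting carries via a Stickelberger/Shimura--Taniyama computation --- is not available in the setting of Theorem \ref{thm: necessary criterion for supersingularity}. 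Here $W$ is an \emph{abstract} module with a $G$-action and a perfect pairing; there are no Gauss or Jacobi sums whose valuations one could compute. Even in the geometric incarnations, that explicit valuation analysis is precisely the hard computation (cf.\ the Blache reference) that the paper's argument is designed to bypass. So this is a genuine gap, not a missing detail.

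The idea you are missing is a one-step Galois observation that settles both parities of $r$ at once and also makes your descent/determinant integrality argument unnecessary. Because $q=p$, the element $a$ with $\sigma_p(m)=am$ has coefficients in $R=\mathbb Z_p$, which are fixed by $\Gal(\mathbb Q_p(\mu_n)/\mathbb Q_p)$; and the characters $\chi_1,\dots,\chi_r$ of a single $\sigma_p$-orbit differ from one another by raising values (which lie in $\mu_n$) to powers of $p$, i.e.\ by elements of that Galois group. Hence $\chi_1(a),\dots,\chi_r(a)$ are all Galois conjugate over $\mathbb Q_p$ and share one common valuation $\alpha\in\mathbb Z_{\geq 0}$, and likewise all $\chi_i^{-1}(a)$ share a common valuation $\beta\in\mathbb Z_{\geq 0}$ (integrality because $\mathbb Z_p[\mu_n]$ is unramified). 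Your identity then reads $\alpha+\beta=1$, so $\{\alpha,\beta\}=\{0,1\}$: in your language the count $\#\{i:b_i=1\}$ is $0$ or $r$, never $r/2$, and the two orbits are isoclinic of slopes $0$ and $1$ rather than $\tfrac12$. This is the whole of the paper's proof beyond the bookkeeping you already have. Note also that this is exactly where the hypothesis $q=p$ enters (over $\mathbb F_{p^m}$, $m>1$, the coefficients of $a$ are no longer Galois-fixed, so orbit factors need not be conjugate), which explains the sensitivity to the field of definition that you correctly flagged but attributed instead to a determinant-descent integrality statement.
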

\begin{proof}
Since the theorem is insensitive to the base change $W \to W\otimes\mathbb Z_q[\mu_n]$, we can suppose that $\mu_n \subset \mathbb Z_q$. We continue with the same notation as in Theorem \ref{thm: sufficient criterion for supersingularity} (with $W$ replacing $M$ throughout to emphasize the $p$-adic nature of it). Let $\chi_1,\dots,\chi_r$ be a Frobenius orbit of characters for $G$. By our assumption, we can suppose that $\chi_j \neq \chi_i^{-1}$ for any $1 \leq i,j \leq r$.

Exactly as in Theorem \ref{thm: sufficient criterion for supersingularity}, we see that
\begin{equation*}
    \chi_{i+1}(a)\chi^{-1}_{i+1}(a) = p\frac{\langle w_{\chi_i},w_{\chi^{-1}_i}\rangle}{\langle w_{\chi_{i+1}},w_{\chi^{-1}_{i+1}}\rangle}
\end{equation*}
and taking a product over $i=1,\dots,r$, we have
\begin{equation}\label{eqn: converse eigenvalue formula}
    \prod_{i=1}^r\chi_i(a)\prod_{i=1}^{r}\chi_i^{-1}(a) = p^r.
\end{equation}
Note that both the products on the left are eigenvalues of $\sigma_p^r$ on $W(\chi_i), W(\chi_i^{-1})$ respectively. Since $\sigma_p$ is an automorphism of $G$, we have $\sigma_p(g) = g^{m_g}$ for some $m_g$ coprime to $n$. Therefore, $\chi_i(g) = \chi_1^{m_g^{i-1}}(g)$ is Galois conjugate over $\mathbb Z_p$ to $\chi_1(g)$ (since $\chi_1(g) \in \mu_n$) and the $\chi_i(a)$ are all Galois conjugates of $\chi_1(a)$ (and similarly for $\chi_i^{-1}(a)$ with respect to $\chi_1^{-1}(a)$) in the ring $R$, and hence have the same $p$-adic valuation.

Let $p^\alpha$ be the highest power of $p$ dividing $\chi_1(a)$ (and hence $\chi_i(a)$ for $i=1,\dots,r$) and $p^\beta$ be the highest power dividing the $\chi_i^{-1}(a)$. Both $\alpha,\beta \geq 0$ since $a$ is an integral element and are integers since $\mathbb Z_q$ is unramified.

Comparing the p-adic valuation of the two sides of equation \ref{eqn: converse eigenvalue formula} then shows that $p^r = p^{r(\alpha+\beta)}$ and hence $\alpha+\beta = 1$ which implies that one of them is $1$ and the other is $0$. That is, the eigenvalues of $\sigma_p^r$ come in pairs, exactly one of which is a p-adic unit. This implies that $W$ is not supersingular.

\end{proof}

\section{Examples}\label{sec: examples}

We demonstrate several applications of the above theorems to the cohomology of varieties. We note that for $G = \mu_n$ and $\chi: \mu_n \to \mathbb Q(\mu_n)$ the canonical character sending a generator to a primitive root of unity, $\chi$ and $\chi^{-1}$ are in the same $\sigma_q$ orbit if and only if $q^s \equiv -1 \pmod{n}$ for some $s \geq 1$. Moreover, in this case $\sigma_q: \mu_n \to \mu_n$ is given by $g \to g^q$.

\begin{theorem}\label{thm: middle dim sufficiency}
Let $X/\mathbb F_q$ be a $d$ dimensional variety with an abelian subgroup $G \subset \operatorname{End}_{\overline{\mathbb F}_q}(X)$ such that $H^{d}_{\text{\'et}}(\overline{X},\mathbb Q_\ell)$ is a quotient of $\mathbb Q_\ell[G]$. Suppose that for every character $\chi$ of $G$ appearing in $H^d_{\text{\'et}}(\overline{X},\mathbb Q_\ell)$, $\chi^{-1}$ is in the Frobenius orbit of $\chi$. Then, $H^{d}_{\text{\'et}}(\overline{X},\mathbb Q_\ell)$ is supersingular.
\end{theorem}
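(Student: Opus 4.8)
The plan is to apply Theorem~\ref{thm: sufficient criterion for supersingularity} directly, taking $M = H^d_{\text{\'et}}(\overline X,\mathbb Q_\ell)$ and $R = \mathbb Q_\ell$; the character hypothesis is then exactly the hypothesis of that theorem, so the whole task reduces to exhibiting on $M$ the three structures demanded by the abstract setup (a $G$-equivariant surjection $R[G]\to M$, a semilinear $\sigma_q$-action, and a perfect $G,\sigma_q$-equivariant pairing into $R(-d)$). First I would record that, because $G$ is a \emph{group} sitting inside the endomorphism monoid $\operatorname{End}_{\overline{\mathbb F}_q}(X)$, each of its elements is invertible and hence an automorphism of $X$. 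Functoriality of \'etale cohomology then makes $M$ a $\mathbb Q_\ell[G]$-module, and the standing assumption that $M$ is a quotient of $\mathbb Q_\ell[G]$ provides the required surjection $\mathbb Q_\ell[G]\to M$ (equivalently, each character of $G$ occurs at most once). The semilinear $\sigma_q$-action is the geometric Frobenius on $H^d$, and its interaction with $G$ is precisely the conjugation $g\mapsto g^{\sigma_q}$ built into the setup, so $\sigma_q$ permutes the $\chi$-eigenspaces as needed.

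The heart of the verification is the perfect pairing, which I would take to be the cup product
\[ \langle-,-\rangle\colon H^d_{\text{\'et}}(\overline X,\mathbb Q_\ell)\otimes H^d_{\text{\'et}}(\overline X,\mathbb Q_\ell)\longrightarrow H^{2d}_{\text{\'et}}(\overline X,\mathbb Q_\ell)\cong \mathbb Q_\ell(-d), \]
which is perfect by Poincar\'e duality for the smooth projective variety $X$. Its $\sigma_q$-equivariance with the twist $q^d$ is immediate from $F^*(\alpha\cup\beta)=F^*\alpha\cup F^*\beta$ together with the fact that geometric Frobenius acts on $H^{2d}\cong\mathbb Q_\ell(-d)$ by $q^d$, which is exactly the action defining $R(-d)$. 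The remaining point, and the one I expect to be the only genuine obstacle, is the $G$-equivariance with \emph{trivial} action on the target: I must argue that each $g\in G$ acts trivially on the one-dimensional space $H^{2d}(\overline X,\mathbb Q_\ell)$. This holds because $g$ is an automorphism of the (geometrically connected, $d$-dimensional) $X$ and therefore fixes the fundamental class generating $H^{2d}$; equivalently $\deg g = 1$. Granting this, functoriality of the cup product gives $\langle g\alpha,g\beta\rangle = g^*(\alpha\cup\beta)=\alpha\cup\beta=\langle\alpha,\beta\rangle$, which is the $G$-invariance with trivial target action required by the setup.

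With these three structures in place, the character hypothesis of the statement coincides with that of Theorem~\ref{thm: sufficient criterion for supersingularity}, so that theorem applies verbatim and yields that $M=H^d_{\text{\'et}}(\overline X,\mathbb Q_\ell)$ is supersingular, which is the desired conclusion. (If the crystalline formulation is preferred, Katz--Messing \cite{katz1974some} upgrades this to supersingularity of $H^d_{\mathrm{crys}}(X)$, i.e.\ supersingularity of $X$ in degree $d$.) In essence the proof is a dictionary translation from the geometry of $X$ into the abstract module $M$, the only nontrivial geometric inputs being Poincar\'e duality and the triviality of the $G$-action on the top cohomology.
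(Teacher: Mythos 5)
Your proof is correct and takes exactly the same route as the paper: the paper's entire proof is to apply Theorem~\ref{thm: sufficient criterion for supersingularity} with the cup product (intersection) pairing $H^d_{\text{\'et}}(\overline X,\mathbb Q_\ell)\otimes H^d_{\text{\'et}}(\overline X,\mathbb Q_\ell)\to H^{2d}_{\text{\'et}}(\overline X,\mathbb Q_\ell)\cong\mathbb Q_\ell(-d)$. The additional verifications you spell out --- that elements of $G$ are automorphisms, that the pairing is perfect by Poincar\'e duality, and that $G$ acts trivially on the top cohomology so the pairing is equivariant with trivial target action --- are precisely the details the paper leaves implicit.
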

\begin{proof}
We apply Theorem \ref{thm: sufficient criterion for supersingularity} using the intersection pairing
\[H^d_{\text{\'et}}(\overline{X},\mathbb Q_\ell)\otimes H^d_{\text{\'et}}(\overline{X},\mathbb Q_\ell) \to H^{2d}_{\text{\'et}}(\overline{X},\mathbb Q_\ell) \cong \mathbb Q_\ell(-d).\]
\end{proof}

\begin{example}\label{eg: fermat varieties}
Examples of varieties satisfying the hypothesis are given by the Fermat varieties \cite{anderson1987torsion} 
\[X_0^n + \dots + X_{d+1}^n = 0 \subset \mathbb P^{d+1}.\] 
Our criterion is equivalent to the existence of an $i \in \mathbb Z$ such that $q^i \equiv -1 \pmod{n}$. \cite[Theorem~2.1]{shioda1979fermat} proves a stronger result using an inductive argument and an explicit computation with Jacobi sums.
\end{example}

\begin{theorem}\label{thm: abelian variety}
Let $A/\mathbb F_q$ be an abelian variety with an abelian subgroup $G \subset \operatorname{End}_{{\mathbb F}_q}(A)$ such that $2\dim A \leq |G|$. Suppose that for every character $\chi$ of $G$ appearing in $H^1_{\text{\'et}}(\overline{A},\mathbb Q_\ell)$, $\chi^{-1}$ is in the Frobenius orbit of $\chi$. Then, $A$ is supersingular.

If moreover $p\nmid|G|$ and $A$ is defined over $\mathbb F_p$, then $A$ is supersingular only if $\chi^{-1}$ is in the $\sigma_p$ orbit of $\chi$ for every character $\chi$ appearing in $H^1_{\text{\'et}}(\overline{A},\mathbb Q_\ell)$.
\end{theorem}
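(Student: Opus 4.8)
The plan is to deduce Theorem \ref{thm: abelian variety} from the two general criteria by identifying the relevant cohomological module $W$ and checking that the hypotheses of Theorems \ref{thm: sufficient criterion for supersingularity} and \ref{thm: necessary criterion for supersingularity} are met. The key object is $H^1_{\text{\'et}}(\overline{A},\mathbb Q_\ell)$ together with its $G$-action and the Weil pairing. The first task is to verify that this is an admissible module in the sense of the set-up on page \pageref{def: set up}: the condition $2\dim A \le |G|$ guarantees that $\dim_{\mathbb Q_\ell} H^1 = 2\dim A \le |G| = \dim \mathbb Q_\ell[G]$, and since $H^1$ is assumed to be a quotient of $\mathbb Q_\ell[G]$, each character of $G$ appears at most once, so there is a $G$-equivariant surjection $\mathbb Q_\ell[G]\to H^1$ as required.

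For the sufficiency direction I would apply Theorem \ref{thm: sufficient criterion for supersingularity} with $d=1$, using the Weil pairing
\[
H^1_{\text{\'et}}(\overline{A},\mathbb Q_\ell)\otimes H^1_{\text{\'et}}(\overline{A},\mathbb Q_\ell)\to H^2_{\text{\'et}}(\overline{A},\mathbb Q_\ell)\cong \mathbb Q_\ell(-1),
\]
which is the perfect, $G$- and $\sigma_q$-equivariant pairing demanded by the hypotheses. The equivariance under $G$ follows because $G$ acts by endomorphisms of $A$, hence compatibly with the cup product, and the Frobenius-equivariance is standard. Given the orbit hypothesis on characters, Theorem \ref{thm: sufficient criterion for supersingularity} immediately yields that $H^1$ is supersingular, and since supersingularity of an abelian variety is governed by its first cohomology (equivalently its Tate module / Dieudonné module), this gives supersingularity of $A$ itself.

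For the necessity direction, under the extra hypotheses $p\nmid |G|$ and $A$ defined over $\mathbb F_p$ (so $q=p$), I would invoke Katz--Messing to pass from the crystalline statement to the $\ell$-adic one and then apply the contrapositive of Theorem \ref{thm: necessary criterion for supersingularity}. Here one uses the crystalline realization $W = H^1_{\mathrm{crys}}(A)/W(\mathbb F_p)$, a module over $R = \mathbb Z_p$ with the semilinear Frobenius, and the same $G$-action and polarization pairing now landing in $R(-1)$. If there were a character $\chi$ appearing in $H^1$ with $\chi^{-1}$ outside the $\sigma_p$-orbit of $\chi$, Theorem \ref{thm: necessary criterion for supersingularity} would force $W$ to be non-supersingular, contradicting supersingularity of $A$; this proves the contrapositive.

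The main obstacle I expect is not the orbit combinatorics but the bookkeeping needed to legitimately apply the general criteria: specifically, confirming that the condition $p\nmid|G|$ is exactly what is needed to realize the $G$-action integrally on the crystalline cohomology (so that the isotypic projectors $\pi_\chi$ live in $R(\mu_n)[G]$ and the semilinear Frobenius formalism of Section \ref{sec: necessary criterion} applies), and that the Weil pairing descends to the required pairing $W\otimes_R W\to R(-1)$ with the correct Tate twist. Once the module $W$ is seen to satisfy all three structures of the set-up, both directions follow formally from the cited theorems with $d=1$.
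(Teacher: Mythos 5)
Your reduction to Theorems \ref{thm: sufficient criterion for supersingularity} and \ref{thm: necessary criterion for supersingularity} via the Weil pairing (resp.\ the crystalline pairing into $\mathbb Z_p(-1)$) is exactly the paper's strategy, but there is a genuine gap at the very first step: you write ``since $H^1$ is \emph{assumed} to be a quotient of $\mathbb Q_\ell[G]$.'' That is not a hypothesis of this theorem --- it is the hypothesis of Theorem \ref{thm: middle dim sufficiency}, the preceding result for general varieties. Here the only assumptions are that $G \subset \operatorname{End}_{\mathbb F_q}(A)$ is abelian and $2\dim A \leq |G|$, and the whole content of the abelian-variety case, beyond quoting the two general criteria, is to \emph{derive} the quotient property (``complex multiplication'') from these assumptions. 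The paper does this with a faithfulness argument: an element $f \in \mathbb Q_\ell[G] \subset \operatorname{End}_{\overline{\mathbb F}_q}(A)\otimes\mathbb Q_\ell$ acting as zero on $H^1_{\text{\'et}}(\overline{A},\mathbb Q_\ell)$ kills the Tate module up to isogeny, so $\ell^k \mid \deg(f)$ for arbitrarily large $k$, forcing $f=0$. Since $\mathbb Q_\ell[G]$ is semisimple and $G$ is abelian, faithfulness means every character of $G$ appears at least once in $\overline{H^1}$, so $|G| \leq \dim_{\mathbb Q_\ell} H^1 = 2\dim A$; combined with $2\dim A \leq |G|$ each character appears exactly once, which is precisely the required surjection $\mathbb Q_\ell[G] \to H^1$. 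Without this step your verification of the set-up of Section 2 is circular, since ``is a quotient of $\mathbb Q_\ell[G]$'' and ``admits a $G$-equivariant surjection from $\mathbb Q_\ell[G]$'' are the same statement.

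The remainder of your proposal matches the paper. For sufficiency: $H^i = \bigwedge^i H^1$ reduces everything to degree one, and Theorem \ref{thm: sufficient criterion for supersingularity} applies with $d=1$ and the Weil pairing. For necessity: the paper likewise takes $W = H^1_{\mathrm{crys}}(A)$ over $\mathbb Z_p$ (well defined because $A$ is defined over $\mathbb F_p$) and applies Theorem \ref{thm: necessary criterion for supersingularity}; the pairing used there is the Cartier pairing of Oda, cited directly, rather than a ``descent'' of the Weil pairing, and your observation that $p \nmid |G|$ is what makes the projectors $\pi_\chi$ integral is correct and consistent with the paper's hypotheses. So once the faithfulness argument above is inserted, your proof goes through essentially as the paper's does.
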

\begin{proof}
Since $H^i_{\text{\'et}}(\overline{A},\mathbb Q_\ell) = \bigwedge^i H^1_{\text{\'et}}(\overline{A},\mathbb Q_\ell)$, supersingularity of $A$ is equivalent to the supersingularity of $H^1_{\text{\'et}}(\overline{A},\mathbb Q_\ell)$. For $v \in H^1_{\text{\'et}}(\overline{A},\mathbb Q_\ell)$, note that the map 
\begin{align*}
    \mathbb Q_\ell[G] &\to H^1_{\text{\'et}}(\overline{A},\mathbb Q_\ell)\\
    f &\to fv
\end{align*}
is an injection: If $f \in \mathbb Q_\ell[G] \subset \mathrm{End}_{\overline{\mathbb F}_q}(A)\otimes \mathbb Q_\ell$ maps to $0$, then $\ell^k | \deg(f)$ for $k$ arbitrarily large and consequently $f = 0$. Since $\dim_{\mathbb Q_\ell}H^1_{\text{\'et}}(\overline{A},\mathbb Q_\ell) = 2\dim A \leq \dim_{\mathbb Q_\ell}Q_\ell[G]$, $H^1_{\text{\'et}}(\overline{A},\mathbb Q_\ell)$ is a quotient of $\mathbb Q_\ell[G]$ and each character of $G$ appears at most once.

For the forward implication, we apply Theorem \ref{thm: sufficient criterion for supersingularity} using the Weil pairing
\[H^1_{\text{\'et}}(\overline{A},\mathbb Q_\ell)\otimes H^1_{\text{\'et}}(\overline{A},\mathbb Q_\ell) \to \mathbb Q_\ell(-1).\]
To prove the necessity of the criterion, we apply Theorem \ref{thm: necessary criterion for supersingularity} with $W = H^1_{\mathrm{crys}}(A/\mathbb F_q)$ and the Cartier pairing (\cite[Theorem~1.1]{oda1969first})
    \[H^1_{\mathrm{crys}}(A)\otimes H^1_{\mathrm{crys}}(A) \to \mathbb Z_q(-1).\]
Note that since $A$ is assumed to be defined over $\mathbb F_p$ in this case, $\sigma_p: W \to W$ is well defined.

\end{proof}

\begin{example}\label{eg: CM AV}
Let $A_n$ be the CM Abelian variety with CM by the cyclotomic field $\mathbb Q(\mu_n)$. Then our criterion applies to the reduction of $A_n$ at any place of $\mathbb Q(\mu_n)$. We can take $G = \mu_n$ in the above theorem so that $2\dim A = \phi(n) < n = |G|$.\footnote{The cyclotomic field $\mathbb Q(\mu_n)$ has degree $\phi(n) = |(\mathbb Z/n\mathbb Z)^\times|$ over $\mathbb Q$ and the dimension of the corresponding CM abelian variety is equal to half this degree.} The above theorem then implies that the reduction $\tilde{A}_n/\mathbb F_q$ is supersingular if there exists some $r$ such that $q^r\equiv -1 \pmod{n}$. If moreover the place splits completely, i.e., $q=p$ then the converse is also true. 
\end{example}

\begin{remark}\label{rmk: CM AV}
    The above example is also a special case of the fact that if $A$ is a CM Abelian variety with endomorphism algebra $K$ and totally real subfield $K^+$, and if every prime in $K^+$ over some $p$ is inert in $K$, then $A$ is supersingular over the reductions of these primes. In the case where $K = \mathbb Q(\mu_n)$, this criterion is exactly equivalent to there existing some $r$ such that $p^r \equiv -1 \pmod{n}$.
\end{remark}

\begin{example}\label{eg: Fermat curves}

We note that the above theorem also applies for curves $C$ with the same hypothesis since $H^1_{\text{\'et}}(\overline{C},\mathbb Q_\ell)$ can be canonically identified with $H^1_{\text{\'et}}(\operatorname{Jac} \overline{C},\mathbb Q_\ell)$. A family of examples is given by the Fermat curves and their quotients. An alternate characterization of these curves is as abelian covers of $\mathbb P^1$ ramified over three points.

\end{example}

Theorem \ref{thm: sufficient criterion for supersingularity} does not apply to the Artin-Schreier curves $C_{q,n}$:
\[y^q - y = x^n \text{ over } \mathbb F_p\]
even though $G = \mathbb F_q \times\mu_n$ acts on $C_{q,n}$ so that its cohomology $H^1_{\text{\'et}}(\overline{C},\mathbb Q_\ell)$ is a quotient of $\mathbb Q_\ell[G]$. Since $\sigma_q$ fixes the first factor $\mathbb F_q \subset G$, it is never the case that the inverse of every character $\chi$ of $G$ appearing in $H^1_{\text{\'et}}(\overline{C},\mathbb Q_\ell)$ is in the Frobenius orbit of $\chi$. Nevertheless, we can modify the above proof.

\begin{theorem}\label{thm: Artin-Schreier curves}
The curves $C_{q,n}/\mathbb F_p$ (with $\gcd(n,p) = 1$) are supersingular if there exists some $s$ such that $p^s \equiv -1 \pmod{n}$. 
\end{theorem}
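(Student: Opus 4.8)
The plan is to keep the telescoping step of Theorem~\ref{thm: sufficient criterion for supersingularity} (equivalently the computation in Theorem~\ref{thm: necessary criterion for supersingularity}) and to supply by hand the single extra input that the inversion-closure hypothesis of those theorems would otherwise provide. Write $G = \mathbb{F}_q \times \mu_n$ and index the characters of $G$ occurring in $W = H^1_{\mathrm{crys}}(C_{q,n}/\mathbb{F}_p)$ as pairs $(\psi_b,\chi_j)$, where $\psi_b$ is the additive character of $\mathbb{F}_q$ attached to $b\in\mathbb{F}_q^\times$ and $\chi_j$ is a nontrivial character of $\mu_n$. The geometric Frobenius acts by $(\psi_b,\chi_j)\mapsto(\psi_{b^p},\chi_{pj})$; in particular it moves the additive part, which is exactly why Theorem~\ref{thm: sufficient criterion for supersingularity} fails here. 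Fix a $\sigma_p$-orbit $\chi_1,\dots,\chi_\rho$, set $\mu = \prod_{i=1}^\rho \chi_i(a)$ (the eigenvalue of $\sigma_p^\rho$ on $W(\chi_1)$), and let $\nu = \prod_{i=1}^\rho \chi_i^{-1}(a)$ be the corresponding eigenvalue on the inverse orbit.

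First I would note that the telescoping argument never used inversion-closure of the orbit: taking the product of \eqref{eqn: inner product formula} over a full cyclic orbit collapses the ratio of pairings and yields $\mu\nu = p^\rho$ unconditionally, which is precisely \eqref{eqn: converse eigenvalue formula}. Since $\sigma_p$ cyclically permutes $W(\chi_1),\dots,W(\chi_\rho)$, its eigenvalues on $\bigoplus_i W(\chi_i)$ are the $\rho$-th roots of $\mu$, so $W$ is supersingular along this orbit exactly when $v_p(\mu) = \rho/2$ (normalizing $v_p(p)=1$). By $\mu\nu = p^\rho$ this is equivalent to $v_p(\mu) = v_p(\nu)$, so the whole theorem reduces to the single claim $v_p(\mu) = v_p(\nu)$.

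To prove that claim I would identify the eigenvalues explicitly. A Lefschetz point count for $y^q - y = x^n$ shows that $\mu$ is, up to sign, the Gauss sum $g(\chi_j,\Psi_b) = \sum_t \chi_j(t)\Psi_b(t)$ over the field $\mathbb{F}_{p^\rho}$, where $\Psi_b$ is the additive character of $\mathbb{F}_{p^\rho}$ induced by $b$. Two standard facts about the $p$-adic valuation of Gauss sums then finish the job. First, rescaling the additive character only multiplies the Gauss sum by a value of $\chi_j$, namely $g(\chi_j,\Psi_b) = \chi_j^{-1}(b)\,g(\chi_j,\Psi_1)$, a root of unity and hence a $p$-adic unit; thus $v_p(\mu)$ is independent of the additive part $b$ and depends only on $j$. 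Second, the substitution $\chi_j\mapsto\chi_j^{p}$ is realized by the decomposition-group generator at $p$, which preserves $v_p$, so $v_p(g(\chi_j,\Psi)) = v_p(g(\chi_j^{p^k},\Psi))$ for all $k$. Now the hypothesis $p^s \equiv -1 \pmod n$ gives $\chi_j^{-1} = \chi_j^{p^s}$, and combining the two facts $v_p(\mu) = v_p(g(\chi_j,\cdot)) = v_p(g(\chi_j^{p^s},\cdot)) = v_p(g(\chi_j^{-1},\cdot)) = v_p(\nu)$. With $\mu\nu = p^\rho$ this forces $v_p(\mu) = \rho/2$, and supersingularity follows.

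The main obstacle is the concrete step in the third paragraph: pinning down the Frobenius eigenvalues as Gauss sums with the correct normalization and field of definition, and establishing that their $p$-adic valuation is insensitive to the additive character $\Psi_b$. This insensitivity is exactly the feature that neutralizes the non-inverted additive factor of $G$ — the obstruction flagged just before the theorem — and it is what replaces the hypothesis ``$\chi^{-1}$ lies in the Frobenius orbit of $\chi$'' of Theorem~\ref{thm: sufficient criterion for supersingularity}. Everything else (the telescoping giving $\mu\nu = p^\rho$, and the invariance of $v_p$ under $\chi_j\mapsto\chi_j^{p}$) is either already available from Section~\ref{sec: sufficient criterion} or is a standard property of the decomposition group at $p$.
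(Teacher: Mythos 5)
Your overall skeleton does match the paper's: the telescoping product of the pairing identity gives $\mu\nu = p^{\rho}$ unconditionally (this is exactly equation \ref{eqn: converse eigenvalue formula}), and the heart of the argument is comparing $v_p(\mu)$ with $v_p(\nu)$ using the hypothesis $p^s \equiv -1 \pmod{n}$. But your reduction in the second paragraph has a genuine gap: you claim that supersingularity along the orbit is \emph{equivalent} to $v_p(\mu) = \rho/2$, and only one direction of this is true. The paper's definition of supersingularity demands that each eigenvalue be $p^{1/2}\zeta$ with $\zeta$ a root of unity, and $v_p(\mu) = \rho/2$ does not by itself produce the root of unity. Indeed, the pair $\mu = p^{\rho/2}(1+\sqrt{2})$, $\nu = p^{\rho/2}(\sqrt{2}-1)$ satisfies every constraint you establish --- both are algebraic integers, $\mu\nu = p^{\rho}$, and $v_p(\mu) = v_p(\nu) = \rho/2$ at every place over $p$ --- yet neither is $p^{\rho/2}$ times a root of unity. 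Ruling this out requires archimedean input, and this is precisely what the final paragraph of the paper's proof supplies: reading $\mu_{\theta}^r\mu_{\theta^{-1}}^r = p^r$ at every prime $\ell$ shows $p^{-1/2}\mu_\theta$ is a unit at all finite places, its Galois conjugates are numbers of the same kind, and one then concludes (via the Kronecker-type argument, i.e.\ ultimately the Weil bound for curves) that it is a root of unity. Your proposal stops at the valuation statement (``and supersingularity follows''), so this entire step is missing. If you keep your Gauss-sum identification you could close the gap by citing that Gauss sums have complex absolute value $p^{\rho/2}$ in every embedding and then applying Kronecker's theorem; but some such argument must be made.

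A secondary point: the Gauss-sum identification that you flag as ``the main obstacle'' is avoidable, and the paper avoids it. To get $v_p(\mu) = v_p(\nu)$ the paper never counts points: since $a_p$ has coefficients in $\mathbb{Z}_p$, it suffices to exhibit an element of $\Gal(\mathbb{Q}_p(\zeta_p,\zeta_n)/\mathbb{Q}_p)$ carrying $\theta = (\psi,\chi)$ to $\theta^{-1}$, since Galois conjugation over $\mathbb{Q}_p$ preserves $v_p$. The inertia subgroup acts on $\zeta_p$ through all of $(\mathbb{Z}/p)^{\times}$, so it supplies $\zeta_p \mapsto \zeta_p^{-1}$ --- this is how the non-inverted additive factor of $G$ gets neutralized, playing the role of your rescaling identity $g(\chi,\Psi_b) = \chi^{-1}(b)\,g(\chi,\Psi_1)$ --- while the hypothesis $p^s \equiv -1 \pmod{n}$ supplies the unramified part $\zeta_n \mapsto \zeta_n^{p^s} = \zeta_n^{-1}$. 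This is shorter and sidesteps the normalization issues (the correct trace twists of the additive character over $\mathbb{F}_{p^{\rho}}$ when $q \neq p$) that your point count would need to get right.
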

\begin{proof}
For any prime $\ell$ (including $\ell = p$), let $K_\ell = \mathbb Q_\ell(\zeta_p,\zeta_n)$ be the minimal extension over which the characters of $G$ split. As in the proof of Theorem \ref{thm: abelian variety}, we can use either \'etale cohomology ($\ell\neq p$) $M_\ell = H^1_{\text{\'et}}(\overline{C}_{q,n},\mathbb Q_\ell)$ or crystalline cohomology ($\ell = p$) $M_p = H^1_{\mathrm{crys}}(C)$ to compute the eigenvalues of $\sigma_p$.

As before, we suppose that $\sigma_p$ acts on $M_\ell$ by $a_\ell \in \mathbb Z_\ell[G]$ after fixing a generator $m \in M$. Let us fix a character $\theta = (\psi,\chi)$ of $G = \mathbb F_q \times \mu_n$ so that $\psi$ is non trivial and $\chi$ is primitive and let $r$ be the size of its orbit  \(\theta = \theta_1,\dots,\theta_r\) under the Frobenius $\sigma_p$ so that $\sigma_p^r$ fixes the eigenspace of $\theta$ in $M$ and the eigenvalues of $\sigma_p^r$ are given by
    \begin{equation}\label{eqn: eigenvalue K_ell}
        \mu_\theta^r = \prod_{i=1}^r\theta_i(a_\ell) \in K_\ell.
    \end{equation}
In this case, since the $\sigma_p$ action fixes the $\psi$ component of $\theta$, it is never the case that $\theta^{-1}$ is in the orbit of $\theta$. As before, we can use the Weil pairing or the Cartier pairing (in the cases $\ell \neq p$ and $\ell = p$ respectively) to show that
    \begin{equation}\label{eqn: eigenvalue K_ell II}
        \mu_{\theta}^r\mu_{\theta^{-1}}^r = p^r.
    \end{equation}

The Frobenius acts on $\mu_n$ by $g \to g^p$ for $g \in \mu_n$. Since $\chi(g) \in \mu_N$, it is Galois conjugate to $\chi^{\sigma_p}(g) = \chi^{-p}(g)$. By assumption, $\chi^{-1}$ is in the Frobenius orbit of $\chi$ so that $\theta^{-1}(a_p)$ is Galois conjugate to $\theta(a_p)$ considered as elements of $K_p/\mathbb Q_p$. Equation \ref{eqn: eigenvalue K_ell} then shows that $\mu_\theta$ and $\mu_{\theta^{-1}}$ are conjugate to each other and thus have the same $p$-adic valuation.  Equation \ref{eqn: eigenvalue K_ell II} then implies that $\mu_\theta,\mu_{\theta^{-1}}$ are both divisible by $p^{1/2}$ for each $\theta$ as above. 

In particular, $p^{-1/2}\mu_\theta$ is a p-adic unit and equation \ref{eqn: eigenvalue K_ell II} shows also that it is a $\ell$-adic unit for all $\ell$. Therefore, it is an algebraic unit and so are all its Galois conjugates (since these Galois conjugates correspond to other $\theta$ as above). The only such numbers are roots of unity and therefore, $C_{q,n}$ is supersingular.

\end{proof}

The theorem also shows the necessity of (at least some of) the hypothesis in Theorem \ref{thm: necessary criterion for supersingularity}.

\begin{remark}
We do not know of an exact characterization of supersingularity for Artin-Schreier curves. Numerical evidence suggests that the converse of our Theorem is also true.
\end{remark}

\appendix

%----------------------------------------------------------------------------------------
%	BIBLIOGRAPHY
%----------------------------------------------------------------------------------------

\renewcommand{\refname}{\spacedlowsmallcaps{References}} % For modifying the bibliography heading

\bibliographystyle{alpha}

\bibliography{main.bib} % The file containing the bibliography

%----------------------------------------------------------------------------------------

\end{document}